\newtheorem{theorem}{Theorem}
\newtheorem{corollary}[theorem]{Corollary}
\newtheorem{definition}[theorem]{Definition}
\newtheorem{example}[theorem]{Example}
\newtheorem{lemma}[theorem]{Lemma}
\newenvironment{proof}[1][Proof]{\noindent\textbf{#1.} }{\ \rule{0.5em}{0.5em}}
\newcommand\subdir{\setbox0=\hbox{$\mathchar "0362$}%
\mathop{\displaystyle\smash{\raise 0.3\ht0\hbox{\hbox to \wd0%
{\hfil\vrule height4pt width0.4pt\hfil}\kern-\wd0%
\lower\ht0\box0}}}\limits}
\def\mkdirsemi#1#2#3#4{\mathbin{\times\hbox to #4{}\hbox{\vrule
 \@width #1\@height #2\@depth #3}\hbox to .2em{} }}
\newcommand{\rnk}[2]{\lower 0.2ex\hbox{$#1$}\kern -.1em{\setminus}
                    \kern -.1em\raise 0.2ex\hbox{$#2$}}
\newcommand{\dnk}[3]{\lower 0.2ex\hbox{$#1$}\kern -.1em{\setminus}
                    \kern -.1em\raise 0.2ex\hbox{$#2$}
                    \kern -.1em{/}\kern -.1em\lower 0.2ex\hbox{$#3$}}
\newcommand{\lnk}[2]{\raise 0.2ex\hbox{$#1$}
                    \kern -.1em{/}\kern -.1em\lower 0.2ex\hbox{$#2$}}
\begin{document}

\tableofcontents

\author{Kenneth M Monks \\ Colorado State University}
\date{February 18, 2010}
\title{The M\"obius Number of the Socle of any Group}

\maketitle

\section{Definitions and Statement of Main Result}

The \textbf{incidence algebra} of a poset $P,\ $written $I\left( P\right) ,$
is the set of all real-valued functions on $P\times P$ that vanish for
ordered pairs $\left( x,y\right) $ with $x\not\leq $ $y.$  If $P$ is finite, by appropriately labeling the rows and columns of a matrix with the elements of $P$, we can see the elements of $I\left( P\right)$ as upper-triangular matrices with zeroes in certain locations.   One can prove $I\left( P\right)$ is a subalgebra of the matrix algebra (see for example \cite{Stanley}).  Notice a function $f \in I(P)$ is invertible if and only if $f\left(x,x\right)$ is nonzero for all $x \in P$, since then we have a corresponding matrix of full rank.  A natural function to consider that satisfies this property is the \textbf{incidence function} $\zeta _{P}$, the characteristic function of the relation $\leq_{P}.$  Clearly $\zeta _{P}$ is invertible by the above criterion, since $x \leq x$ for all $x \in P$.

We define the \textbf{M\"{o}bius function} $\mu _{P}$ to be the
multiplicative inverse of $\zeta _{P}$ in $I\left( P\right).$  If $P$ is a poset that has a minimum element $0$ and a maximum element $1$, the \textbf{M\"{o}bius number} of $P$ is $\mu_P(0,1)$.  Often $\mu_P(0,1)$ is abbreviated to $\mu(P)$.

Notice that if $L_n$ is the poset of divisors of a positive integer $n$ ordered by divisibility, then $\mu(n)=\mu(L_n)$ where $\mu(n)$ is the classical number-theoretic M\"{o}bius function.  Thus this is a generalization of that highly useful function from number theory to other areas of mathematics -to any situation where a poset appears!

In finite group theory, a very natural poset to study is the subgroup lattice of a finite group.  Define the \textbf{M\"{o}bius number of a group} $G$, written $\mu(G)$, to be the M\"{o}bius number of its lattice of subgroups.  A particularly important subgroup of any group is its \textbf{socle}, the subgroup generated by all minimal normal subgroups.  For instance in permutation group theory, the socle is the fundamental tool which allows classification of all primitive actions in the O'Nan-Scott Theorem \cite[Thm 4.1A]{Dixon}.  In \cite{shareshian}, Shareshian computes the M\"{o}bius numbers of several infinite families of symmetric groups by reducing to the primitive groups in each degree.  Since primitive groups are classified by socle type, the socle is particularly important for such computations.  It can be shown that the socle of any finite group is a direct product of simple groups \cite[Thm 4.3A]{Dixon}.  As our main result, we give an explicit formula for the M\"{o}bius number of the socle of a group in terms of the M\"{o}bius numbers of the simple groups which make up the socle.

\begin{theorem}\label{main}
Let $G$ be a direct product of simple groups.  More specifically, let $U_1, U_2, \ldots, U_m$ be distinct nonabelian simple groups and let $C_{p_1}, C_{p_2}, \ldots, C_{p_n}$ be abelian simple groups (cyclic groups) for some distinct primes $p_1,p_2,\ldots,p_n$.  Let $A_i$ be the size of the automorphism group of $U_i$ for each $i\in \{1\ldots m\}$.  Let $F=\overset{n}{\underset{i=1}{\sum}}{f_i}$.  Let $$G=C_{p_1}^{f_1}\times C_{p_2}^{f_2} \times \cdots \times C_{p_n}^{f_n} \times U_1^{e_1}\times U_2^{e_2} \times \cdots \times U_m^{e_m}.$$  Then 
$$\mu(G)=(-1)^{F} \overset{n}{\underset{i=1}{\prod{}}}p_i^{\binom{f_i}{2}} \overset{m}{\underset{i=1}{\prod{}}}\overset{e_i}{\underset{j=1}{\prod{}}}(\mu(U_i)-(j-1)A_i). $$
\end{theorem}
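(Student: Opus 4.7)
The plan is a multiplicative decomposition: first show that $\mu(G)$ factors as a product over the direct-product blocks of $G$, then compute each factor separately.

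First, I would establish $\mu(G_1 \times G_2) = \mu(G_1)\mu(G_2)$ for the factor pairs appearing in $G$. By Goursat's lemma, a subgroup of $G_1 \times G_2$ is specified by a quintuple $(X_1, X_1', X_2, X_2', \phi)$ with $X_i' \trianglelefteq X_i \leq G_i$ and $\phi : X_1/X_1' \to X_2/X_2'$ an isomorphism. When $G_1$ and $G_2$ share no nontrivial isomorphic subquotient, $\phi$ must be trivial, so the subgroup lattice of $G_1 \times G_2$ is the Cartesian product of those of $G_1$ and $G_2$, and $\mu$ multiplies by the standard product formula for Möbius functions of product posets. This handles the pairings $C_{p_i}^{f_i}$ versus $C_{p_j}^{f_j}$ for distinct primes, and $U_i^{e_i}$ versus $U_j^{e_j}$ for distinct nonabelian simples (every subquotient of $U_i^{e_i}$ has composition factors only of type $U_i$). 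The mixing of an abelian block $C_p^f$ with a nonabelian block $U_i^{e_i}$ where $p \mid |U_i|$ does produce genuine Goursat diagonals so the lattice is not a product; that case requires a separate \emph{mixing lemma} asserting that $\mu$ nonetheless factors.

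Next, I would compute each block. The abelian block $C_p^f$ has subgroup lattice equal to the subspace lattice of $\mathbb{F}_p^f$, so $\mu(C_p^f) = (-1)^f p^{\binom{f}{2}}$ by the classical formula of P.\ Hall. For the nonabelian block, I would prove $\mu(U^e) = \prod_{j=1}^e(\mu(U) - (j-1)A)$ by induction on $e$, with base case $e=1$ trivial. In the inductive step, apply Weisner's theorem to $U^e$ at $N = 1^{e-1} \times U$, a minimal normal subgroup. A Goursat analysis identifies the subgroups $H \leq U^e$ satisfying $HN = U^e$ as exactly (a) the product subgroups $U^{e-1} \times Y$ for $Y \leq U$ (the $Y=U$ case being $H=U^e$ itself), and (b) for each coordinate $k < e$ and each $\alpha \in \mathrm{Aut}(U)$, a diagonal $\{(g', \alpha(\pi_k(g'))) : g' \in U^{e-1}\} \cong U^{e-1}$, yielding $(e-1)A$ such diagonals. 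Weisner's identity then reads
$$\mu(U^e) + \sum_{Y < U} \mu(U^{e-1} \times Y) + (e-1)A\,\mu(U^{e-1}) = 0.$$
Substituting the mixing lemma $\mu(U^{e-1} \times Y) = \mu(U^{e-1}) \mu(Y)$ for proper $Y < U$ and applying the elementary identity $\sum_{Y \leq U} \mu(Y) = 0$, this collapses to the recursion $\mu(U^e) = (\mu(U) - (e-1)A)\,\mu(U^{e-1})$, which iterates to the claimed product formula. Combining the abelian and nonabelian block contributions yields the main theorem.

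The main obstacle is the mixing lemma in those cases where the subgroup lattice of $G_1 \times G_2$ is genuinely not a product: the abelian-nonabelian mixing needed to split $G$ into blocks, and $\mu(U^{e-1} \times Y)$ for proper $Y < U$ needed in the nonabelian induction. In both situations Goursat produces real diagonal subgroups, so $\mu$ cannot factor on purely lattice-theoretic grounds. I expect establishing this to require either an interleaved induction proving the mixing lemma and the main formula simultaneously, or a dedicated cancellation argument showing that the contributions of the diagonal Goursat subgroups to the Möbius sum vanish.
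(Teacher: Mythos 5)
Your overall architecture (split $G$ into blocks, compute each block, derive a recursion for $\mu(U^e)$) is sound, and your Weisner-based recursion would work once its inputs are available. But there are two genuine gaps, and they share a single missing idea. First, your product-lattice argument for separating $U_i^{e_i}$ from $U_j^{e_j}$ fails: it is not true that every subquotient of $U_i^{e_i}$ has composition factors only of type $U_i$. Subgroups of $U_i^{e_i}$ are arbitrary, so for instance $A_5\times A_6$ contains genuine Goursat diagonals glued over common sections of proper subgroups, such as a subdirect product of $A_4\le A_5$ with $A_4\le A_6$ over the common quotient $C_3$; the subgroup lattice is therefore very far from being a product of the two lattices. (The paper makes exactly this point with $S_3\times A_5$.) So the distinct-nonabelian-simple case is in the same boat as the abelian/nonabelian case: it needs your ``mixing lemma.'' Second, that mixing lemma --- which you correctly identify as the main obstacle, and on which your substitution $\mu(U^{e-1}\times Y)=\mu(U^{e-1})\mu(Y)$ for $Y<U$ also depends --- is left unproven, so the argument is incomplete at precisely its load-bearing step.

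The missing idea is Crapo's complement theorem. The right hypothesis for multiplicativity is not ``no common nontrivial subquotient'' (which would force the lattice to be a product, and is false in every case you need except coprime orders) but the much weaker condition that no nontrivial homomorphic image of $K$ is isomorphic to a subgroup of $H$. Under that hypothesis, any complement $K'$ to the normal subgroup $H$ in $G=H\times K$ must project onto $K$ with trivial kernel, and its projection into $H$ is a homomorphic image of $K$ sitting inside $H$, hence trivial; so $K$ is the \emph{unique} complement to $H$, even though the lattice teems with diagonal subgroups lower down. Since distinct complements to a normal subgroup are incomparable, Crapo's theorem collapses to $\mu(G)=\mu(K)\,\mu(K,G)=\mu(K)\,\mu(H)$. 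This one lemma disposes of all the mixing situations at once: distinct primes; the abelian part against the nonabelian part; the smallest $U_1^{e_1}$ against the remaining nonabelian factors (a nontrivial image of the latter would contain some $U_j$ with $j\ge 2$, which cannot embed in $U_1^{e_1}$ since it would then embed in $U_1$ with $|U_j|>|U_1|$); and $Y<U$ against $U^{e-1}$. With it in hand your Weisner recursion goes through; the paper instead obtains the same recursion $\mu(U^e)=(\mu(U)-(e-1)A)\,\mu(U^{e-1})$ directly by applying Crapo's theorem to the $1+(e-1)A$ complements of the first factor in $U^e$.
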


\section{Proof of Main Result}

\subsection{Complements}

\begin{definition} 
Let $x$ be an element of a lattice $L$ with minimal element 0 and maximal element 1.  An element $y \in L$ is a {\bf complement} for $x$ in $L$ if and only if the greatest lower bound of $x$ and $y$ is 0 and the least upper bound of $x$ and $y$ is 1.  We often write $x^{\bot_L}$ for the set of all complements to $x$ in $L$, or simply $x^\bot$ when it is clear what lattice we are using.
\end{definition}

For example in $L_{12}$, the lattice of divisors of 12, 3 is a complement for 4 but not for 6.

Theorem \ref{crapo}, often referred to as Crapo's Complement Theorem \cite{Crapo}, is a very powerful tool in
obtaining the M\"{o}bius number of a lattice.

\begin{theorem} \label{crapo} \cite{Crapo}
\label{crapo} Let $L$ be a lattice with minimum element 0 and maximum element 1.  Define $\zeta$ on $L\times L$ as the characteristic function of the relation $\leq$. Let $x\in L$ and let $x^\bot$ be the set of complements to $x$ in $L$.  Then $$ \mu(L)=\underset{y,z \in x^\bot}{\sum{}}\mu(0,y)\zeta(y,z)\mu(z,1).$$

In particular, if there exists $x\in L$ such that $x$ has no complement in $L,$ then $\mu \left( L\right) =0.$
\end{theorem}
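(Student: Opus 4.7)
The plan is to derive Crapo's identity from Weisner's theorem, which in turn follows immediately from the defining relations $\mu\zeta=\zeta\mu=\delta$ in $I(L)$. Recall Weisner's theorem: for $x>0$ one has $\sum_{y:\,y\vee x=1}\mu(0,y)=0$, and dually for $x<1$, $\sum_{z:\,z\wedge x=0}\mu(z,1)=0$.

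The first step is a warm-up observation: without any restriction to $x^\bot$, the double sum already telescopes. Since $\sum_{y\leq z}\mu(0,y)=(\mu\zeta)(0,z)=\delta(0,z)$, we have
$$\sum_{y\leq z}\mu(0,y)\,\mu(z,1)\;=\;\sum_z\delta(0,z)\,\mu(z,1)\;=\;\mu(0,1)\;=\;\mu(L).$$
Thus the content of Crapo's theorem is really that the pairs $(y,z)$ with $y\notin x^\bot$ or $z\notin x^\bot$ make zero net contribution.

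Next I would simplify the complement condition. For $y\leq z$, the pair $(y,z)$ lies in $x^\bot\times x^\bot$ if and only if $y\vee x=1$ and $z\wedge x=0$: one direction is by definition, and for the other, use the monotonicities $z\vee x\geq y\vee x=1$ and $y\wedge x\leq z\wedge x=0$. The degenerate cases $x\in\{0,1\}$ make $x^\bot$ a singleton and the theorem an immediate identity, so I may assume $0<x<1$. It therefore suffices to prove
$$\sum_{\substack{y\leq z\\y\vee x=1,\ z\wedge x=0}}\mu(0,y)\,\mu(z,1)\;=\;\mu(L).$$

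Finally, I would view Crapo's right-hand side as the $(0,1)$-entry of the incidence-algebra product $\mu\,\chi\,\zeta\,\chi\,\mu$, where $\chi$ is the diagonal element of $I(L)$ supported on $x^\bot$, and show it equals the $(0,1)$-entry of $\mu\zeta\mu=\mu$. The Weisner identities and their duals are precisely what is needed to argue that sandwiching $\zeta$ by $\chi$ between the two outer $\mu$'s does not change the $(0,1)$-entry, since the dropped terms (those violating $y\vee x=1$ or $z\wedge x=0$) are zeroed out by Weisner-type cancellations. The main obstacle is the coupling of the two conditions: a naive application of Weisner cancels sums over all of $L$, but here the $y$- and $z$-sums are interlocked by $y\leq z$. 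The resolution is careful fiberwise bookkeeping: fix $z$ with $z\wedge x=0$, apply Weisner in a suitable sub-interval to the inner $y$-sum to force $y\vee x=1$, and then apply dual Weisner to the resulting $z$-sum to recover $\mu(L)$.
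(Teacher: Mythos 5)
A preliminary remark: the paper does not prove this theorem at all; it is imported verbatim from Crapo's paper and used as a black box, so your attempt can only be judged against the standard argument, not against anything in the text. Your first two steps are correct and are genuinely the right reductions: the unrestricted sum $\sum_{y\le z}\mu(0,y)\mu(z,1)$ collapses to $\mu(0,1)$ because $\mu\zeta=\delta$, and for $y\le z$ membership of the pair in $x^\bot\times x^\bot$ decouples into the two one-sided conditions $y\vee x=1$ and $z\wedge x=0$.

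The gap is the entire final step, and the route you name for it fails. Weisner's theorem applied inside the interval $[0,z]$ concerns sums over $y$ with $y\vee a=z$ for an element $a$ of $[0,z]$; your inner sum is governed by the condition $y\vee x=1$, where $x\notin[0,z]$ in general and the join is taken in $L$, so there is no sub-interval in which it is a Weisner sum. Worse, the dropped terms do not cancel fiberwise in $z$ at all: in the subgroup lattice of $C_2\times C_2$ with $x=a$ an atom and $z=b$ another atom, $\sum_{y\le b}\mu(0,y)=\delta(0,b)=0$ while $\sum_{y\le b,\ y\vee a=1}\mu(0,y)=-1$, so the single excluded term $y=0$ contributes $+1$ to that fiber. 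The cancellation is only global, and exhibiting it requires the interchange trick that also proves Weisner itself: replace the indicator $[y\vee x=1]$ by $\sum_{w\ge y,\ w\ge x}\mu(w,1)$, swap the order of summation so that the $y$-sum becomes $\sum_{y\le z\wedge w}\mu(0,y)=\delta(0,z\wedge w)$, note that $w\ge x$ together with $z\wedge w=0$ already forces $z\wedge x=0$, and arrive at $\sum_{w\ge x}\mu(w,1)\sum_{z:\,z\wedge w=0}\mu(z,1)$; the dual Weisner theorem kills every term with $w<1$, and the term $w=1$ contributes exactly $\mu(0,1)$. Your toolkit is the right one, but the one computation that constitutes the proof is absent, and the fiberwise plan you propose in its place is demonstrably false.
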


Rephrasing the definition of a complement in group-theoretic terms, we have that if $H$ and $K$ are subgroups of $G$ then $H$ is a complement for $K$ in the lattice of subgroups of $G$ if and only if $H \cap K$ is the trivial group and $<H,K>=G$.  The following simple but useful lemma shows why typically in applying Crapo's Complement Theorem we want to look at complements to a normal subgroup.

\begin{lemma} \label{normalComplements}
Let $H\lhd G$.  Let $K_1$ and $K_2$ be two different complements to $H$ in $G$.  Then $K_1 \not \leq K_2$.
\end{lemma}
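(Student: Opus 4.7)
The plan is to prove the contrapositive: assume $K_1 \leq K_2$ and derive that $K_1 = K_2$. The crucial leverage point is that normality of $H$ upgrades the abstract join $\langle H, K_1 \rangle = G$ into the genuine set-theoretic equality $HK_1 = G$, because $HK_1$ is then a subgroup. Without this upgrade, the definition of complement would only give us elements of $G$ as words in $H \cup K_1$, which is too weak for the argument below.

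So first I would record the two consequences of $H \lhd G$ and the complement hypothesis on $K_1$: namely $H \cap K_1 = 1$ and $HK_1 = G$. Next, I would pick an arbitrary $k \in K_2$ and use $HK_1 = G$ to write $k = hk_1$ for some $h \in H$ and $k_1 \in K_1$. Because we are assuming $K_1 \leq K_2$, the element $k_1$ lies in $K_2$, and therefore $h = kk_1^{-1} \in K_2$ as well. But $h$ also lies in $H$, so $h \in H \cap K_2 = 1$ (using that $K_2$ is a complement to $H$). Hence $h = 1$ and $k = k_1 \in K_1$. Since $k \in K_2$ was arbitrary, $K_2 \leq K_1$, and combined with the hypothesis $K_1 \leq K_2$ this gives $K_1 = K_2$, contradicting the assumption that the complements are different.

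I do not anticipate a serious obstacle; the only subtle point is remembering that normality is precisely what converts $\langle H, K_1 \rangle = G$ into $HK_1 = G$, which is what lets us factor the arbitrary element $k \in K_2$. An alternative finishing move, if one prefers, is to observe via the product formula that $|K_1| = |G|/|H| = |K_2|$ and conclude equality from $K_1 \leq K_2$ by cardinality; but the factorization argument above has the virtue of not requiring $G$ to be finite.
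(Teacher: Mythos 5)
Your proof is correct, but it takes a different route from the paper's. The paper argues by cardinality: by the Second Isomorphism Theorem, every complement $K$ to the normal subgroup $H$ satisfies $K \cong KH/H = G/H$, so $|K_1| = |G/H| = |K_2|$, and a containment between two finite subgroups of equal order forces equality. Your argument instead chases elements: from $HK_1 = G$ you factor an arbitrary $k \in K_2$ as $hk_1$, push $h$ into $H \cap K_2 = 1$, and conclude $K_2 \leq K_1$ directly. Both proofs rest on the same pivot --- normality converts $\langle H, K_1\rangle = G$ into $HK_1 = G$ --- and indeed your closing remark about the product formula is essentially the paper's proof. What your version buys is independence from finiteness: the paper's order comparison needs $|K_1| = |K_2|$ to be a meaningful finite quantity, whereas your factorization works verbatim for infinite groups. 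In the context of this paper (finite groups throughout), the two are equally valid, and the paper's is slightly shorter; yours is the more robust and more elementary argument.
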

\begin{proof}
By the Second Isomorphism Theorem, any complement to $H$ must be isomorphic to $G/H$.  In particular, $|K_1|=|G/H|=|K_2|$.  Thus if $K_1$ and $K_2$ are distinct they are not comparable since if one was a subgroup of the other, they would be equal. 
\end{proof}

Equivalently rephrased in lattice-theoretic terms, a normal subgroup is always a {\bf modular element} of the subgroup lattice.  Thus in this case we can use a much simpler form of Crapo's Complement Theorem:

\begin{corollary}
\label{crapoMod} Let $H\lhd G$.  Let $H^\bot$ be the set of complements to $H$ in $G$.  Then $$ \mu(G)=\underset{K \in H^\bot}{\sum{}}\mu(K)\mu(K,G).$$
\end{corollary}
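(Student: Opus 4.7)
The plan is to obtain this corollary as a direct specialization of Theorem \ref{crapo} (Crapo's Complement Theorem), using Lemma \ref{normalComplements} to collapse the double sum to a single sum.

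First, I would apply Theorem \ref{crapo} to the subgroup lattice $L$ of $G$, taking the distinguished element $x$ to be $H$. Here the minimum element $0$ of $L$ is the trivial subgroup and the maximum element $1$ is $G$ itself, so Crapo's formula reads
$$\mu(G) = \sum_{K_1, K_2 \in H^\bot} \mu(\{e\}, K_1)\, \zeta(K_1, K_2)\, \mu(K_2, G),$$
where $\zeta$ is the incidence function of the subgroup lattice and $\mu(\{e\}, K_1)$ is, by definition, just $\mu(K_1)$.

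Next, I would use Lemma \ref{normalComplements} to kill most of the terms. Since $H$ is normal, any two distinct complements $K_1, K_2 \in H^\bot$ satisfy $K_1 \not\leq K_2$, so $\zeta(K_1, K_2) = 0$ whenever $K_1 \neq K_2$. The only surviving contributions come from the diagonal $K_1 = K_2 = K$, where $\zeta(K, K) = 1$. The double sum therefore collapses to
$$\mu(G) = \sum_{K \in H^\bot} \mu(K)\, \mu(K, G),$$
which is the claimed formula.

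There is no real obstacle here; the work is entirely in invoking Theorem \ref{crapo} correctly and observing that normality of $H$ forces the incidence function to vanish off the diagonal of $H^\bot \times H^\bot$. The substance of the argument is packaged into Lemma \ref{normalComplements}, and the corollary is essentially a restatement of Crapo's theorem in the group-theoretic setting where the complement lattice of a normal subgroup is an antichain.
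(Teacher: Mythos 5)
Your proposal is correct and matches the paper's own proof exactly: both apply Theorem \ref{crapo} with $x=H$ and use Lemma \ref{normalComplements} to conclude that $\zeta$ vanishes off the diagonal of $H^\bot \times H^\bot$, collapsing the double sum. Your write-up simply spells out the details more explicitly than the paper does.
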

\begin{proof}
Lemma \ref{normalComplements} implies we have no distinct comparable complements to $H$.  Thus when picking pairs of complements to form the sum in Theorem \ref{crapo}, $\zeta$ vanishes unless the same complement is used twice.
\end{proof}

\subsection{Direct Products}

It would be very useful to have a formula for the M\"{o}bius number of a direct product of groups in terms of the M\"{o}bius numbers of the factors.  At the moment this problem seems intractable due to the incredibly large number of subdirect products which may arise.  However with restrictions on the factors, we get such a formula sufficiently general to account for all socles.  We begin by classifying complements to one factor in a direct product.  (We assume in this section the reader has a basic knowledge of the structure of direct and subdirect products of groups; see \cite{Dixon} for a detailed introduction if needed.)

\begin{lemma}  \label{comps}
Let $G=H \times K$.  Let $\phi_K:G \rightarrow K$ be the projection map onto $K$.  Let $K'$ be a complement for $H$ in the subgroup lattice of $G$.  Then $K'$ is of the form $K'=H_0 \subdir K$ for some $H_0\leq H$ with $H_0$ isomorphic to some homomorphic image of $K$.  Additionally, $\phi_K$ has trivial kernel when restricted to $K'$.
\end{lemma}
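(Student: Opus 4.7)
The plan is to exploit the fact that $H$ is normal in the direct product $G=H\times K$, so that the Second Isomorphism Theorem pins down the order of any complement, and then to recognize any such complement as the graph of a homomorphism $K\to H$.

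First I would observe that because $H\trianglelefteq G$, any complement $K'$ for $H$ satisfies $K'\cong G/H\cong K$; in particular $|K'|=|K|$. Next, I would restrict the projection $\phi_K$ to $K'$ and compute its kernel: $\ker(\phi_K|_{K'})=K'\cap\ker\phi_K=K'\cap H=\{1\}$, where the last equality is the defining property of a complement (trivial meet with $H$). This is exactly the second assertion of the lemma, and it immediately yields that $\phi_K|_{K'}$ is injective. Combined with $|K'|=|K|$, it follows that $\phi_K|_{K'}\colon K'\to K$ is in fact a bijection.

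With that bijection in hand, every element of $K'$ has a unique expression $(h(k),k)$ with $k\in K$ and $h(k)\in H$, defining a map $h\colon K\to H$. The fact that $K'$ is closed under multiplication in $H\times K$ forces $h$ to be a group homomorphism, and I would verify this directly by multiplying two such tuples. Setting $H_0:=h(K)\leq H$, the First Isomorphism Theorem identifies $H_0$ with $K/\ker h$, so $H_0$ is isomorphic to a homomorphic image of $K$ as required. Finally, $K'=\{(h(k),k):k\in K\}$ is exactly the graph-type subdirect product $H_0\subdir K$ in the notation used in the paper.

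There is no real obstacle here: the key move is the size-and-kernel argument that makes $\phi_K|_{K'}$ a bijection, after which the structure of $K'$ as the graph of a homomorphism (equivalently, a subdirect product with full projection onto $K$) is automatic. The only care needed is in matching the graph subgroup with the paper's $H_0\subdir K$ notation and in noting that the homomorphism $h$ need not be injective, which is precisely why $H_0$ is described as a homomorphic image rather than an isomorphic copy of $K$.
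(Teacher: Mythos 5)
Your proposal is correct and follows essentially the same route as the paper: both arguments use normality of $H$ and the Second Isomorphism Theorem to get $K'\cong K$, compute $\ker(\phi_K|_{K'})=K'\cap H=1$, and take $H_0$ to be the image of $K'$ under the projection to $H$ (your $h(K)$ equals the paper's $\phi_H(K')$, since $h=\phi_H\circ(\phi_K|_{K'})^{-1}$). Your explicit graph-of-a-homomorphism description is just a slightly more concrete phrasing of the paper's statement that $K'$ is a subdirect product of $H_0$ with $K$.
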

\begin{proof}
Let $\phi_H:G \rightarrow H$ similarly be the projection map onto $H$.  Let $K'\leq G$ and assume $K'$ is a complement of $H$.  Since $<H,K'>=G$ and $H=\ker{\phi_K}$, we have $\phi_K(K')=\phi_K(<H,K'>)=\phi_K(G)=K$.  Let $H_0=\phi_H(K')$.  By the Second Isomorphism Theorem, $K'$ is isomorphic to $K$.  Thus $H_0$ is isomorphic to a homomorphic image of $K$ since projection maps are homomorphisms.  The image lies in $H$, so $H_0 \leq H$.

Thus we have that $K'$ is a subdirect product of $H_0$ with $K'$.

Since $\ker(\phi_K)=H$, we have $\ker(\phi_K|_{K'})=K' \cap \ker(\phi_K)= K' \cap H$ which is trivial.  Thus we see that $\phi_K$ has trivial kernel when restricted to $K'$.
\end{proof}

Note the condition that $\phi_K$ has trivial kernel when restricted to $K'$ is intuitively saying that $H_0$ must be `fully glued' to some factor of $K'$.  This makes sense because having any `unglued' pieces of $H_0$ would intersect nontrivially with $H$ and thus $K'$ would not be a complement.

Also notice that this does in fact give an algorithm for constructing all complements to $H$ in $G=H\times K$.  We get one class of complements for each such $H_0$ (an automorphism may be applied to form the subdirect product in a different manner). Thus finding all complements simply amounts to finding all subgroups of $H$ isomorphic to a homomorphic image of $K$ and knowing the automorphisms of these subgroups.  We use this strategy to compute the M\"{o}bius numbers of some special direct products.

\begin{lemma} \label{muMult}
Let $G=H \times K$ and assume $K$ has no nontrivial homomorphic image isomorphic to a subgroup of $H$.  Then $\mu(H \times K)=\mu(H)\mu(K)$.
\end{lemma}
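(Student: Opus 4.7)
The plan is to apply Corollary \ref{crapoMod} to the normal subgroup $H \lhd G$. First I would identify all complements to $H$ in $G$ using Lemma \ref{comps}: any complement $K'$ has the form $K' = H_0 \subdir K$, where $H_0 \leq H$ is isomorphic to a homomorphic image of $K$. By the hypothesis on $K$, the only such $H_0$ is trivial, so the unique complement to $H$ in $G$ is $K$ itself.

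Corollary \ref{crapoMod} then collapses to $\mu(G) = \mu(K)\mu(K,G)$, so the remaining task is to show $\mu(K,G) = \mu(H)$. I would do this by exhibiting a lattice isomorphism between the interval $[K,G]$ in the subgroup lattice of $G$ and the full subgroup lattice of $H$. The map I have in mind sends $L \in [K,G]$ to $L \cap H$, with inverse $L_0 \mapsto L_0 \times K$. To check that every $L$ with $K \leq L \leq H \times K$ arises this way, I would use that $K \leq L$, so for any $(h,k) \in L$ we can multiply by $(1,k^{-1}) \in K$ to conclude $(h,1) \in L$, hence $h \in L \cap H$. Thus $L = (L \cap H) \times K$, and the maps are mutually inverse order-preserving bijections. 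Since M\"obius numbers of intervals depend only on the lattice structure, $\mu(K,G) = \mu(H)$.

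Combining the two steps gives $\mu(G) = \mu(H)\mu(K)$, as desired. I do not anticipate a serious obstacle: the hypothesis on $K$ was tailored precisely to kill every complement to $H$ other than $K$ itself, and the factorization $L = (L \cap H) \times K$ is the standard reason intervals above a direct factor look like the lattice of the other factor. The only step that requires any care is verifying this factorization, which relies essentially on $K$ being normal together with $K \leq L$.
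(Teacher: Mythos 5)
Your proposal is correct and follows essentially the same route as the paper: reduce to the single complement $K$ via Lemma \ref{comps} and the hypothesis on $K$, then apply Corollary \ref{crapoMod} together with the identification of the interval $[K,G]$ with the subgroup lattice of $H$. The only cosmetic difference is that you verify the interval isomorphism by hand via $L \mapsto L \cap H$, whereas the paper simply cites the Fourth Isomorphism Theorem for the same fact.
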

\begin{proof}
We look at complements to $H$ in the subgroup lattice of $G$.  The only possibility for $H_0$ as described in Lemma \ref{comps} is the trivial group, which gives $K$ itself as the only complement.  Since $K$ is normal in $G$, the interval of subgroups between $K$ and $G$ is isomorphic to the subgroup lattice of $H$ by the Fourth Isomorphism Theorem.  Thus we can apply Crapo's Complement Theorem in the simpler form given in Corollary \ref{crapoMod} to get that the M\"{o}bius number of $G$ is $$\mu(G)= \mu(K)\mu(K,G)=\mu(K)\mu(H).$$
\end{proof}

\begin{example}
One can easily compute (for example just from the definition of $\mu$ using \texttt{GAP}) that the M\"{o}bius number of $S_3$ is 3 and the M\"{o}bius number of $A_5$ is -60.  Thus the M\"{o}bius number of $S_3 \times A_5$ is -180 since the only homomorphic images of $A_5$ are trivial or $A_5$ itself, and $A_5$ is not isomorphic to a subgroup of $S_3$.
\end{example}

Notice that Lemma \ref{muMult} has a very nice aesthetic parallel to a similar property of the number-theoretic M\"{o}bius function.  Namely, for any integers $n$ and $m$, if $n$ and $m$ are coprime then $\mu(nm)=\mu(n)\mu(m)$.  Here we have a similar result for groups; it would seem that the hypothesis of Lemma \ref{muMult} is a sufficient condition for whatever it means for two groups to be coprime!

Also notice that Lemma \ref{muMult} does not follow from the fact that the M\"{o}bius number of a product poset is the product of the M\"{o}bius numbers of each factor.  In general the lattice of subgroups of $H \times K$ will be much larger than the product lattice of the subgroups of $H$ with the subgroups of $K$.  The example above illustrates this: many subdirect products of $S_2$ (a homomorphic image of $S_3$) with a subgroup of $A_5$ isomorphic to $S_2$ will appear in the subgroup lattice of $S_3 \times A_5$.  These subdirect products are not direct products of subgroups of each lattice.

\subsection{The Homomorphic Images of a Product of Simple Groups}

In order to apply Lemma \ref{muMult}, we will first want a classification of all possible homomorphic images of a direct product of simple groups.

\begin{lemma} \label{homIms}
Let $U_1, U_2, \ldots, U_m$ be distinct nonabelian simple groups. Let $G=U_1^{e_1}\times U_2^{e_2} \times \cdots \times U_m^{e_m}$.  Then every homomorphic image of $G$ is of the form $U_1^{f_1}\times U_2^{f_2} \times \cdots \times U_m^{f_m}$ for some natural numbers $f_i \leq e_i$ for $i \in \{1,2,\ldots,m\}$.

\end{lemma}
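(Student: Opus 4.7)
The plan is to classify the normal subgroups of $G$ and read off the quotients directly. Writing $U_{i,j}$ for the $j$th copy of $U_i$ sitting inside $G$, I claim every normal subgroup $N \lhd G$ equals $\prod_{(i,j) \in S} U_{i,j}$ for some subset $S$ of the index set $\{(i,j) : 1 \le i \le m,\ 1 \le j \le e_i\}$. Granting this, $G/N \cong \prod_{(i,j) \notin S} U_{i,j}$, which regroups as $\prod_i U_i^{f_i}$ with $f_i \le e_i$, and since every homomorphic image of $G$ arises as $G/N$ for some $N \lhd G$, the lemma follows.

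To establish the classification, fix $N \lhd G$, let $\pi_{i,j} \colon G \to U_i$ denote projection onto the $(i,j)$ coordinate, and set $S = \{(i,j) : \pi_{i,j}(N) \neq 1\}$. The containment $N \subseteq \prod_{(i,j) \in S} U_{i,j}$ is immediate from the definition of $S$, so the work is the reverse inclusion, for which it suffices to show $U_{i,j} \subseteq N$ whenever $(i,j) \in S$.

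This is the key step and the main obstacle. The approach is a commutator trick: pick any $n \in N$ with $u := \pi_{i,j}(n) \neq 1$, and use that $U_i$ is nonabelian simple, hence centerless, to choose $v \in U_{i,j}$ with $[u,v] \neq 1$. Viewed in $G$, the element $v$ commutes with every factor except $U_{i,j}$, so the commutator $[n,v]$ lies inside $U_{i,j}$ with $(i,j)$-coordinate equal to $[u,v] \neq 1$. Hence $N \cap U_{i,j}$ is a nontrivial normal subgroup of the simple group $U_{i,j}$, and so equals $U_{i,j}$. The nonabelian hypothesis is essential here: without it the commutator $[u,v]$ is forced to be trivial, and indeed for abelian simple factors $C_p$ every subgroup of $C_p^e$ is normal, producing many more quotients than the classification above would predict.
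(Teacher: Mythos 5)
Your proof is correct, and while it follows the same overall plan as the paper's (reduce to classifying normal subgroups of $G$, then use the projections onto the simple factors), it handles the crucial step in a genuinely different and more self-contained way. The paper observes that $N$ projects trivially or fully onto each factor, concludes that $N$ is a subdirect product of powers of the $U_i$, and then asserts that ``simplicity implies the subdirect product degenerates to a direct product'' --- an assertion that is false for abelian simple factors (diagonal subgroups of $C_p\times C_p$ are normal subdirect products that are not products of factors) and is only repaired by a remark \emph{after} the proof invoking inner automorphisms to rule out diagonal gluing. Your commutator argument does this work inside the proof itself: from $\pi_{i,j}(N)\neq 1$ you produce a nontrivial element of $N\cap U_{i,j}$ by commutating with a suitable $v\in U_{i,j}$, using that a nonabelian simple group is centerless, and then simplicity forces $U_{i,j}\leq N$. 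This makes explicit exactly where normality and the nonabelian hypothesis enter, and it directly yields $N=\prod_{(i,j)\in S}U_{i,j}$ without any appeal to the structure theory of subdirect products. The trade-off is that the paper's phrasing fits its running theme of analyzing subdirect products (used again in Lemma \ref{comps} and Theorem \ref{dirProd}), whereas yours is the more elementary and airtight argument for this particular lemma.
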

\begin{proof}  This result is equivalent to showing that any normal subgroup of $G$ also has the above form.  
Let $N \lhd G$.  Let $\pi$ be the projection of $N$ onto $U$, some factor of $G$.  Since $U$ is simple, $\pi(N)$ is either trivial or all of $U$, since intersecting a normal subgroup of $G$ with a factor of $G$ will produce again a normal subgroup of $G$.  Thus since $N$ projects trivially or fully onto each factor, $N$ is a subdirect product powers of the $U_i$.  Simplicity of the $U_i$ implies that any subdirect product actually degenerates to a direct product, so $N$ is of the required form.
\end{proof}

Note that additionally for the normal subgroup, the `gluing' in the subdirect product must be trivial, ie we cannot have any diagonal subdirect products of the simple groups in $N$.  If we did, one could apply an inner automorphism, conjugating by an element of one of the simple groups in the diagonal subdirect product, and $N$ would not be normal.

\subsection{The M\"{o}bius Number of a Direct Power of an Abelian Simple Group}

A direct power of an abelian simple group looks like $C_p^n=\underset{n}{\underbrace{C_p \times C_p \times \cdots \times C_p}}$ for some prime $p$ and some integer $n$.  This is clearly isomorphic to a vector space of dimension $n$ over the field with $p$ elements, and the subgroups of such a group will correspond to the subspaces of a such a vector space.  The M\"{o}bius number of a finite vector space is well-known and can be found for example in Chapter 3 Exercises 28 and 45 of \cite{Stanley}.  Thus we have:

\begin{corollary}\label{mobVec}
The M\"{o}bius number of $C_p^n$ is $(-1)^np^{\binom{n}{2}}$.
\end{corollary}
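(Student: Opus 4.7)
The plan is to identify the subgroup lattice of $C_p^n$ with the lattice of $\mathbb{F}_p$-subspaces of $\mathbb{F}_p^n$, and then compute $\mu$ by induction on $n$, using Corollary \ref{crapoMod} as the main tool. The identification is immediate: every element of $C_p^n$ has order dividing $p$, so integer multiplication by any element of $\{0,1,\ldots,p-1\}$ coincides with repeated group addition. Hence the subgroups of $C_p^n$ are exactly the $\mathbb{F}_p$-subspaces of $\mathbb{F}_p^n$, and their lattice structures agree. Since $\mu$ is a lattice invariant, we may work in the subspace lattice throughout.

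For the base cases, $n=0$ gives the one-element lattice with $\mu = 1 = (-1)^0 p^0$, and $n=1$ gives a two-element chain with $\mu = -1 = (-1)^1 p^0$. For the inductive step with $n \geq 2$, pick any subgroup $H \leq G := C_p^n$ of order $p^{n-1}$. Since $G$ is abelian, $H \lhd G$, so Corollary \ref{crapoMod} applies. By Lemma \ref{normalComplements}, every complement to $H$ has order $|G/H| = p$ and meets $H$ trivially; conversely, any order-$p$ subgroup not contained in $H$ is such a complement. I would then count these: the $p^n - p^{n-1} = p^{n-1}(p-1)$ elements outside $H$ partition (under the $\mathbb{F}_p^\times$-action) into $p^{n-1}$ subgroups of order $p$, so $|H^\bot| = p^{n-1}$. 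For each such $K$ we have $\mu(K) = \mu(C_p) = -1$, and by the Fourth Isomorphism Theorem the interval $[K,G]$ is isomorphic to the subgroup lattice of $G/K \cong C_p^{n-1}$, whose M\"obius number is $(-1)^{n-1}p^{\binom{n-1}{2}}$ by the inductive hypothesis. Plugging into Corollary \ref{crapoMod} yields $\mu(G) = p^{n-1} \cdot (-1) \cdot (-1)^{n-1} p^{\binom{n-1}{2}} = (-1)^n p^{(n-1) + \binom{n-1}{2}} = (-1)^n p^{\binom{n}{2}}$, where the last equality uses the identity $(n-1) + \binom{n-1}{2} = \binom{n}{2}$.

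The main potential obstacle is really just the combinatorial count $|H^\bot| = p^{n-1}$, which follows from the clean orbit argument sketched above. I prefer this inductive route over quoting the known M\"obius number of the subspace lattice of $\mathbb{F}_p^n$ as a black box because it uses only tools the paper has already assembled, namely Corollary \ref{crapoMod} applied to the normal (indeed modular) hyperplane $H$.
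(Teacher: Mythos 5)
Your proof is correct, but it takes a genuinely different route from the paper, which does not argue at all: it simply identifies the subgroup lattice of $C_p^n$ with the lattice of subspaces of an $n$-dimensional vector space over $\mathbb{F}_p$ and quotes the well-known M\"obius number of that lattice from Stanley (Chapter 3, Exercises 28 and 45). You instead give a self-contained induction using Corollary \ref{crapoMod} applied to an index-$p$ subgroup $H$: your identification of $H^\bot$ with the order-$p$ subgroups not contained in $H$ is right (any such $K$ meets $H$ trivially and generates $G$ with the maximal subgroup $H$, and conversely Lemma \ref{normalComplements} forces $|K|=p$), your count $|H^\bot|=p^{n-1}$ is correct, and the exponent identity $(n-1)+\binom{n-1}{2}=\binom{n}{2}$ closes the induction. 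What your approach buys is consistency and self-containment: the corollary is derived by exactly the same Crapo-plus-normal-complement machinery used everywhere else in the paper, and in fact your argument is structurally the abelian analogue of the proof of Theorem \ref{dirProd}. What the paper's approach buys is brevity and a pointer to the standard context (the characteristic polynomial of the subspace lattice), at the cost of importing an external result as a black box. Either is acceptable; yours could replace the citation without weakening anything.
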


\subsection{The M\"{o}bius Number of a Direct Power of a Nonabelian Simple Group}

\begin{theorem} \label{dirProd}
Let $T$ be a nonabelian simple group.  Let $T^n=\underset{n}{\underbrace{T \times T \times \cdots \times T}}$.  Then $$\mu(T^n)=\overset{n}{\underset{{j=1}}{\prod{}}}{(\mu(T)-(j-1)A)}$$ where $A$ is the size of the automorphism group of $T$.
\end{theorem}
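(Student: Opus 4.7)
The plan is to induct on $n$, with the base case $n = 1$ immediate. For the inductive step, write $G = T^n$ as $T_1 \times K$ where $T_1$ is the first factor and $K = T^{n-1}$ is the product of the remaining $n-1$ factors, and apply Corollary \ref{crapoMod} to the normal subgroup $T_1$.

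The first task is to classify the complements to $T_1$ in $G$. By Lemma \ref{comps} combined with the simplicity of $T_1$, the subgroup $H_0 \leq T_1$ in the decomposition $K' = H_0 \subdir K$ is either trivial or equal to $T_1$. The trivial case yields the single complement $K' = K$. The full case forces $K'$ to be the graph $\{(\phi(k), k) : k \in K\}$ of a surjective homomorphism $\phi \colon K \to T_1$. By Lemma \ref{homIms} and the remark that normal subgroups of $T^{n-1}$ are exactly subproducts of the factors, the kernel of such a $\phi$ omits exactly one factor, so $\phi$ is determined by choosing one of the $n-1$ factors of $K$ together with an automorphism of $T$. This yields $(n-1)A$ distinct diagonal complements.

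Next I would evaluate each contribution $\mu(K')\,\mu(K', G)$. For $K' = K$, the Fourth Isomorphism Theorem identifies the interval of subgroups between $K$ and $G$ with the subgroup lattice of $G/K \cong T$, so the contribution is $\mu(T^{n-1})\mu(T)$. For each diagonal complement $K'$, the Second Isomorphism Theorem gives $K' \cong G/T_1 \cong T^{n-1}$, so $\mu(K') = \mu(T^{n-1})$; it remains to show $\mu(K', G) = -1$ by proving each such $K'$ is maximal in $G$. To this end, let $K' \leq M \leq G$ and set $N = M \cap T_1$. Conjugating any $(t, 1) \in N$ by $(\phi(k), k) \in K'$ yields $(\phi(k)\, t\, \phi(k)^{-1}, 1) \in N$; since $\phi(K) = T_1$, this says $T_1$ normalizes $N$, so simplicity forces $N \in \{1, T_1\}$. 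If $N = T_1$ then $M \supseteq T_1 K' = G$, while if $N = 1$ then $|M| \cdot |T| = |M T_1| \leq |G|$ forces $|M| \leq |K'|$ and hence $M = K'$.

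Putting these pieces together, Corollary \ref{crapoMod} gives $\mu(T^n) = \mu(T)\mu(T^{n-1}) - (n-1)A\,\mu(T^{n-1}) = (\mu(T) - (n-1)A)\mu(T^{n-1})$, and the induction hypothesis supplies the remaining factors of the product. I expect the main obstacle to be the analysis of the diagonal complements: correctly enumerating the $(n-1)A$ of them and verifying their maximality. Both steps are manageable because the simplicity of $T$ sharply restricts which subdirect products and subgroup intersections can arise.
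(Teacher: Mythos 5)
Your proposal is correct and follows essentially the same route as the paper: decompose $T^n = T_1 \times K$, use Lemma \ref{comps} and Lemma \ref{homIms} to identify the single complement $K$ plus $(n-1)A$ maximal diagonal complements, and apply Corollary \ref{crapoMod} to obtain a recurrence (your maximality argument via $M \cap T_1$ being normalized by $T_1$ is a harmless variant of the paper's projection/kernel argument). Note that your recurrence $\mu(T^n) = (\mu(T)-(n-1)A)\,\mu(T^{n-1})$ is the correct one: the paper's displayed recurrence contains a typo, writing $(n-1)A\,\mu(T)(-1)$ where the factor $\mu(T)$ should be $\mu(T^{n-1})$, and only your version actually solves to the stated product.
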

\begin{proof}
We use Lemma \ref{comps} to enumerate all possible complements to $T_1$.  In each case we analyze the structure of the interval of the subgroup lattice lying above the complement.  Finally we apply Crapo's Complement Theorem.  

Again let $K'$ be a complement to $T_1$.  Let $K=T_2 \times \cdots \times T_n$.  Then as in the lemma, $K'=H_0 \subdir K$ for some $H_0 \leq T_1$ with $H_0$ a homomorphic image of $K$.  By Lemma \ref{homIms}, the only homomorphic images of $K$ are isomorphic to $1,T,T^2,T^3,\cdots,T^{n-1}$.  However, only $1$ and $T$ are subgroups of $T_1$, so they are the only two choices for $H_0$.

Case 1: If $H_0\cong 1$, $K'=K$.  In this case the interval $[K,G]$ is isomorphic to the subgroup lattice of $T$ since $G/K \cong T_1$ by the First Isomorphism Theorem.

Case 2: If $H_0\cong T$, $K'$ is a subdirect product of $T_1$ and $T_2 \times \cdots \times T_n$.  In this case we claim the interval $[K',G]$ is just a totally ordered lattice with two nodes.  That is, $K'$ is maximal in $G$.  To see this is true, assume we have a subgroup $J$ with $K'\leq J \leq G$.  Then any projection of $K'$ has to be a subgroup of the same projection of $J$.  That is, $\phi_{T_1}(K')=T_1$ and $\phi_{K}(K')=K$ implies $\phi_{T_1}(J)=T_1$ and $\phi_{K}(J)=K$ as well.  Thus $J$ is also a subdirect product of $T_1$ and $K$.  Since $\ker(\phi_K|_J)$ must be a normal subgroup of $T_1$, it is either $1$ or $T_1$.  If $\ker(\phi_K|_J)=1$ then $J=K'$.  If $\ker(\phi_K|_J)=T_1$ then $J=G$.  Thus $K'$ is maximal in $G$.  The M\"{o}bius number of such a lattice, a totally ordered lattice with two elements, is -1.

Now we just have to count how many different ways each can occur.  Clearly there is only one way Case 1 can happen.  However, for Case 2, in the subdirect product $H_0$ can be identified with any of the $n-1$ different homomorphic images of $K$ isomorphic to $T$.  Additionally, any automorphism of $T$ can be applied to $H_0$ to identify it in a different way.  Thus we have $(n-1)A$ different subdirect products in Case 2 where $A=|Aut(T)|$.

At last we apply Crapo's Complement Theorem, summing over complements of $T_1$.  This yields 
$$\mu(T^n)=\mu(T)\mu(T^{n-1})+(n-1)A\mu(T)(-1) $$  
  
which can be viewed as a recurrence relation with respect to $n$.  Solving the recurrence proves the theorem.
  
\end{proof}

\subsection{Proof of Theorem \ref{main}}

Combining Theorem \ref{dirProd} with Lemma \ref{muMult}, we get the M\"{o}bius number of any socle in terms of the M\"{o}bius numbers of the socle types.

\begin{proof}
First assume $G$ has no abelian direct factors.  Then $G=U_1^{e_1} \times U_2^{e_2} \times \cdots \times U_m^{e_m}$ for some nonabelian simple groups $U_1, U_2, \ldots U_m$.  Without loss of generality, assume $|U_1|<|U_2|<\ldots<|U_m|$.  We claim that taking $H=U_1^{e_1}$ and $K=U_2^{e_2} \times \cdots \times U_m^{e_m}$ satisfies the conditions of Lemma \ref{muMult}.  By Lemma \ref{homIms}, we have that the only homomorphic images of $K$ are of the form $U_2^{f_2} \times \cdots \times U_m^{f_m}$ for some $f_i \leq e_i$.  Assume such a group was isomorphic to a subgroup of $H$.  Then we have $U_2 \leq U_2^{f_2} \times \cdots \times U_m^{f_m} \leq H$ (without loss of generality $f_2>0$).  Consider the projection maps from $H$ onto its factors (the copies of $U_1$).  At least one of these projection maps must have a nontrivial image when applied to $U_2 \leq H$, otherwise $U_2$ would be trivial.  Let $\pi:U_2\rightarrow U_1$ be such a map with nontrivial image.  Since $\ker(\pi)$ is a normal subgroup of $U_2$ and the image under $\pi$ is nontrivial, the kernel must be trivial and $\pi(U_2)=U_2 \leq U_1$.  But we cannot have $U_2 \leq U_1$ since $U_2$ has larger order than $U_1$.
Thus applying Lemma \ref{muMult} gives $\mu(G)=\mu(U_1^{e_1})\mu(U_2^{e_2} \times \cdots \times U_m^{e_m})$.  By repeatedly applying this process next with $U_2$, then with $U_3$, and so on, we can write $$\mu(G)=\mu(U_1^{e_1})\mu(U_2^{e_2}) \cdots \mu(U_m^{e_m})$$ and combining the above formula with Theorem \ref{dirProd} proves the result for the case where $G$ has no abelian direct factors.  We now must show that any abelian direct factors split off multiplicatively as well.  

Once again let $G=C_{p_1}^{f_1}\times C_{p_2}^{f_2} \times \cdots \times C_{p_n}^{f_n} \times U_1^{e_1}\times U_2^{e_2} \times \cdots \times U_m^{e_m}$.  Taking $H=C_{p_1}^{f_1}\times C_{p_2}^{f_2} \times \cdots \times C_{p_n}^{f_n}$ and $K=U_1^{e_1}\times U_2^{e_2} \times \cdots \times U_m^{e_m}$ satisfies the conditions of Lemma \ref{muMult}, since an abelian group could not possibly have a subgroup isomorphic to a homomorphic image of a direct product of nonabelian simple groups (since Lemma \ref{homIms} shows that any such homomorphic image is nonabelian).  Thus $\mu(G)=\mu(H)\mu(K)$, so the abelian part does indeed split off multiplicatively.

At last we compute the M\"{o}bius number of the abelian part.   Again Lemma \ref{muMult} implies that $$\mu\left(C_{p_1}^{f_1}\times C_{p_2}^{f_2} \times \cdots \times C_{p_n}^{f_n}\right)=\mu\left(C_{p_1}^{f_1}\right)\mu\left(C_{p_2}^{f_2}\right)\cdots \mu\left(C_{p_n}^{f_n}\right)$$ since no power of a cyclic group $C_p$ could have a subgroup isomorphic to the homomorphic image of an abelian group that does not have elements of order $p$.  Applying Corollary \ref{mobVec} to each factor proves the result.

\end{proof}

\begin{example}
The M\"{o}bius number of $C_2$ is -1.  The M\"{o}bius number of $A_5$ is -60 and its automorphism group has size 120.  The M\"{o}bius number of $A_6$ is 720 and its automorphism group has size 1440.  Thus we have

$\mu(C_2 \times A_5^3 \times A_6^2)=-1*-60*(-60-120)*(-60-2*120)*720*(720-1440)=-1,679,616,000,000$.

This is a computation that would clearly not be feasible by any brute force enumeration of the subgroup lattice!
\end{example}

It should be noted that there is an ongoing project to compute the M\"{o}bius numbers of all sporadic simple groups that has computed the M\"{o}bius numbers of Ru and Suz \cite{bohanon}.  Additionally \cite{ShareshianThesis} has the M\"{o}bius numbers of some infinite families of simple groups, including linear groups of dimension two.

\pagebreak

\bibliography{PrelimBib}{}
\bibliographystyle{plain}

\end{document}